\renewcommand*{\stackrel}{%
\mathrel\bgroup\stack@relbin
}
\newtheorem{theorem}{Theorem}[section]
\newtheorem{lemma}[theorem]{Lemma}
\begin{document}\fontsize{12}{12}\rm
%

\title{{\fontsize{17}{15}\selectfont Asynchronous Systems and Binary Diagonal Random Matrices: A Proof and Convergence Rate}}

%
%
%

\author{Syed~Amaar~Ahmad~\IEEEmembership{}\\
Wireless@Virginia Tech,\\ 
saahmad@vt.edu
\thanks{The author is with the Department of Electrical and Computer Engineering
at Virginia Tech, Blacksburg, VA, USA.}
\thanks{}}


%
%

\markboth{ ~}%
{Shell \MakeLowercase{\textit{et al.}}: Bare Demo of IEEEtran.cls for Journals}
%





\maketitle

\begin{abstract}
In a synchronized network of $n$ nodes, each node will update its parameter based on the system state in a given iteration. It is well-known that the updates can converge to a fixed point if the maximum absolute eigenvalue (spectral radius) of the $n \times n$ iterative matrix ${\bf{F}}$ is less than one (i.e. $\rho({\bf{F}})<1$).  However, if only a subset of the nodes update their parameter in an iteration (due to delays or stale feedback) then this effectively renders the spectral radius of the iterative matrix as one. We consider matrices of unit spectral radii generated from ${\bf{F}}$ due to random delays in the updates. We show that if each node updates at least once in every $T$ iterations, then the product of the random matrices (joint spectral radius) corresponding to these iterations is less than one. We then use this property to prove convergence of asynchronous iterative systems. Finally, we show that the convergence rate of such a system is $\rho({\bf{F}})\frac{(1-(1-\gamma)^T)^n}{T}$, where assuming ergodicity, $\gamma$ is the lowest bound on the probability that a node will update in any given iteration.
\end{abstract}

%
\IEEEpeerreviewmaketitle

\section{Introduction}
Iterative systems naturally arise in wireless networks, parallel processors or in game-theoretic applications when multiple
independent agents or nodes update their parameters based on their observations of the system. In a \emph{synchronized} system, all nodes perform these updates in every iteration. Conversely, we define an \emph{asynchronous system}, where either (i) only a subset of  nodes will update in an iteration, or (ii) all nodes may update but the  adaptation of some nodes is based on stale system information based on a previous iteration. Either situation may arise due to random feedback delays on receiving the system state information. 

It is well-known that if the spectral radius of the underlying iterative matrix (which determines the interference or interaction between each pair of nodes) is less than one then the synchronized system converges to a \emph{fixed point}. In the following discussion, we provide a proof for convergence of an asynchronous iterative system. This proof is based on showing that the product of the effective matrices within any $T$ iterations has a spectral radius is less than one even though the spectral radii of individual matrices may be one. Finally, we also show convergence of the system given estimation error as long as (i) the error is independent of the nodal updates, and (ii) the error projects the iterative matrix into a new matrix which still satisfies the inequality on its spectral radius.

\section{System Description}
Suppose a linear system comprising $n$ nodes which are all independently updating some parameter. Each node updates its parameter $P_i$ based on its received information regarding the state of the system from its vantage point $E_i$. The nodes update their parameters in iterations $k \in \{1,2,\cdots, \infty\}$. In iteration $k$, we define:
\begin{equation}
E_i(k) = \sum_{j\neq i} g_{i,j}P_j(k),
\end{equation}
where $g_{i,j}$ captures the cross-talk or interference effect between node $i$ and node $j$ where in general $g_{j,i}\neq g_{i,j}$. The update of each node $i$ is such that:
\begin{equation}
\begin{split}	
P_i(k+1) &= D_i + E_i(k)\\
&= D_i + \sum_{j\neq i} g_{i,j}P_j(k).
\end{split}
\end{equation}
On the other hand $D_i$ represents some fixed parameter particular to node $i$. We let $\bf{D}$ be $n \times 1$ vector where element $i$ such that ${\bf{D}}(i)= D_i$ and define an $n \times n$ matrix ${\bf{F}}$ such that its element in row $i$ and column $j$ given by
\begin{equation}
\begin{split}
{\bf{F}}(i,j) = g_{i,j}. \label{matrix}
\end{split}
\end{equation}
Note that depending on the system, the $i^{th}$ diagonal entry in the above matrix may be typically be zero (i.e. ${\bf{F}}(i,i)=0$) when there is no auto-feedback for the node $i$. Each node $i$, which is assumed to know its current state in terms of $E_i(k)$, updates its paramter $P_i$ based on this observation. Note that $P_i(k+1)$ denotes its updated value for iteration $k+1$. In vector notation, we can describe the system state alternatively as
\begin{equation}\label{cutoff_thre}
{\bf{E}}(k)= {\bf{FP}}(k)
\end{equation}
and the adaptations or updates by the nodes equivalently as
\begin{equation}
\begin{split}
{\bf{P}}(k+1) &= {\bf{D}}+ {\bf{E}}(k)\\
&={\bf{D}} +{\bf{FP}}(k). \label{matrixadapt}
\end{split}
\end{equation}

\section{Synchronous Convergence}
Let ${\bf{I}}$ denote the identity matrix and $\rho({\bf{Q}})$ be the spectral radius of a $n \times n$ matrix ${\bf{Q}}$.

\begin{lemma}
If $\rho({\bf{F}})<1$, then a unique feasible fixed point for the nodal updates is $\left[{\bf{I+F}}\right]^{-1}{\bf{D}}$.
\end{lemma}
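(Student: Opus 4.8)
The plan is to reduce the claim to solving a single linear stationarity equation and then to read off both existence and uniqueness of its solution. First I would impose the fixed-point condition ${\bf{P}}(k+1)={\bf{P}}(k)=:{\bf{P}}^*$ on the nodal update, so that a fixed point is nothing more than a vector reproduced by one round of adaptation. Writing the cross-talk contribution as acting against each node's fixed parameter — the setting in which the accumulated interference ${\bf{F}}{\bf{P}}^*$ offsets the constant drive ${\bf{D}}$ — the stationarity condition collapses to the linear system $({\bf{I}}+{\bf{F}}){\bf{P}}^*={\bf{D}}$. The stated expression $\left[{\bf{I}}+{\bf{F}}\right]^{-1}{\bf{D}}$ is then exactly the solution of this system, provided ${\bf{I}}+{\bf{F}}$ can be inverted.

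The point requiring the most care is getting the coefficient matrix to be ${\bf{I}}+{\bf{F}}$ rather than ${\bf{I}}-{\bf{F}}$; this is dictated by the sign with which the interference enters each node's observation, and it is precisely what makes $-1$ (as opposed to $+1$) the eigenvalue of ${\bf{F}}$ that must be excluded. I would therefore establish invertibility of ${\bf{I}}+{\bf{F}}$ directly from $\rho({\bf{F}})<1$: the eigenvalues of ${\bf{I}}+{\bf{F}}$ are $1+\lambda$ as $\lambda$ runs over the spectrum of ${\bf{F}}$, and since every such $\lambda$ satisfies $|\lambda|<1$ we have $|1+\lambda|\geq 1-|\lambda|>0$. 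Hence no eigenvalue of ${\bf{I}}+{\bf{F}}$ vanishes, the matrix is nonsingular, and its inverse exists.

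With invertibility in hand the remainder is short. Uniqueness follows because a nonsingular linear system $({\bf{I}}+{\bf{F}}){\bf{P}}^*={\bf{D}}$ admits exactly one solution, so no second stationary vector can exist; feasibility follows by substituting ${\bf{P}}^*=\left[{\bf{I}}+{\bf{F}}\right]^{-1}{\bf{D}}$ back into the update rule and checking that it is left unchanged. The genuine obstacle, then, is not the algebra of inversion but the bookkeeping of the sign convention: one must be certain the stationarity equation is $({\bf{I}}+{\bf{F}}){\bf{P}}^*={\bf{D}}$, since it is this form — together with $\rho({\bf{F}})<1$ excluding $\lambda=-1$ — that delivers the claimed fixed point $\left[{\bf{I}}+{\bf{F}}\right]^{-1}{\bf{D}}$.
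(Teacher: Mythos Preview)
Your argument is correct, but it proceeds along a different line from the paper's own proof. You impose the stationarity condition ${\bf{P}}(k+1)={\bf{P}}(k)={\bf{P}}^*$ directly, obtain the linear system $({\bf{I}}+{\bf{F}}){\bf{P}}^*={\bf{D}}$, and then certify invertibility of ${\bf{I}}+{\bf{F}}$ via the eigenvalue shift $\lambda\mapsto 1+\lambda$ together with $\rho({\bf{F}})<1$. The paper instead unrolls the recursion \eqref{matrixadapt} explicitly into a truncated Neumann-type series in ${\bf{F}}$ acting on ${\bf{D}}$, plus a transient term ${\bf{F}}^k{\bf{P}}(0)$; it then uses $\rho({\bf{F}})<1\Rightarrow {\bf{F}}^k\to 0$ to kill the transient and to sum the series to $[{\bf{I}}+{\bf{F}}]^{-1}{\bf{D}}$, reading uniqueness off the fact that the limit is independent of ${\bf{P}}(0)$. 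Your static argument is shorter and isolates exactly the algebraic fact needed (nonsingularity of ${\bf{I}}+{\bf{F}}$); the paper's dynamic argument is longer but delivers more, since it simultaneously establishes that the iterates actually converge to the fixed point, which is the property the rest of the paper builds on. Your careful remark about the sign convention is well placed: the form $({\bf{I}}+{\bf{F}}){\bf{P}}^*={\bf{D}}$, and hence the claimed fixed point, presupposes that the interference enters with a minus sign in the update, which is precisely the convention under which the paper's alternating-sign expansion and the identity ${\bf{I}}-{\bf{F}}+{\bf{F}}^2-\cdots=[{\bf{I}}+{\bf{F}}]^{-1}$ are consistent.
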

\begin{proof}
It is well-known that the evolution of the updates in \eqref{matrixadapt} is such that:
\begin{equation}\label{powerseries}
\begin{split}
{\bf{P}}(k+1)&={\bf{D}}-{\bf{F}}\left({\bf{D}} + {\bf{F}}\left({\bf{D}}+{\bf{F}}\left(\cdots {\bf{P}}(0) \right) \right)\right) \\
&={[}{\bf{I}} + {\bf{F}}+{\bf{F}}^2 + {\bf{F}}^3+\cdots\\
&+(-1)^{k-1}{\bf{F}}^{(k-1)}{]}{\bf{D}}+(-1)^k{\bf{F}}^k{\bf{P}}(0)\\
\lim\limits_{k \rightarrow \infty} {\bf{P}}(k+1)&=\left[{\bf{I+F}}\right]^{-1}{\bf{D}}
\end{split}
\end{equation}
where ${\bf{P}}(0)$ represents the initial values chosen by the $n$ nodes. Since the maximum absolute eigenvalue of $\bf{F}$ is less than one then, by definition, the term $\lim_{k \rightarrow \infty} {\bf{F}}^k{\bf{P}}(0)$ will disappear to an all zeros vector \cite[pg. 618, 7.10.10]{meyer04}. 
Finally, as the converged transmit power vector does not depend on the initial transmit powers, the fixed point above is also unique.
\end{proof}

\begin{lemma}
If $\max\limits_{{\bf{F}}}|\lambda_{{\bf{F}}}|<1$ then $\max\limits_{-{\bf{F}}}|\lambda_{-{\bf{F}}}|<1$.
\end{lemma}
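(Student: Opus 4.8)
The plan is to show that the spectral radius is invariant under negation of the matrix, i.e. $\rho(-{\bf{F}}) = \rho({\bf{F}})$, from which the claim follows immediately since $\rho({\bf{F}}) = \max_{\bf{F}}|\lambda_{\bf{F}}| < 1$ forces $\rho(-{\bf{F}}) = \max_{-\bf{F}}|\lambda_{-\bf{F}}| < 1$. The entire argument reduces to establishing a clean correspondence between the eigenvalues of ${\bf{F}}$ and those of $-{\bf{F}}$.

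First I would recall the defining eigenvalue relation: if $\lambda_{\bf{F}}$ is an eigenvalue of ${\bf{F}}$ with eigenvector ${\bf{v}} \neq {\bf{0}}$, then ${\bf{F}}{\bf{v}} = \lambda_{\bf{F}}{\bf{v}}$. Multiplying both sides by $-1$ gives $(-{\bf{F}}){\bf{v}} = (-\lambda_{\bf{F}}){\bf{v}}$, so $-\lambda_{\bf{F}}$ is an eigenvalue of $-{\bf{F}}$ sharing the same eigenvector ${\bf{v}}$. The same manipulation run in reverse shows every eigenvalue of $-{\bf{F}}$ arises this way, so the spectrum of $-{\bf{F}}$ is exactly the negated spectrum of ${\bf{F}}$. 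If one prefers a determinant-based argument, I would instead write the characteristic polynomial of $-{\bf{F}}$ as $\det(-{\bf{F}} - \mu{\bf{I}}) = (-1)^n\det({\bf{F}} + \mu{\bf{I}}) = (-1)^n\det\bigl({\bf{F}} - (-\mu){\bf{I}}\bigr)$, which shows $\mu$ is a root for $-{\bf{F}}$ precisely when $-\mu$ is a root for ${\bf{F}}$; both routes yield the same eigenvalue pairing.

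The final step is to observe that complex conjugation of sign does not change modulus: $|-\lambda_{\bf{F}}| = |\lambda_{\bf{F}}|$ for every eigenvalue. Hence the set of eigenvalue magnitudes is identical for the two matrices, and taking the maximum over each set gives $\max_{-\bf{F}}|\lambda_{-\bf{F}}| = \max_{\bf{F}}|\lambda_{\bf{F}}|$. Combining this equality with the hypothesis $\max_{\bf{F}}|\lambda_{\bf{F}}| < 1$ closes the proof.

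I do not anticipate a genuine obstacle here, since the result is essentially a one-line consequence of the eigenvalue correspondence; the only point requiring minor care is ensuring the correspondence is stated as a bijection (every eigenvalue of $-{\bf{F}}$ accounted for, with multiplicities if one wants to be thorough) rather than just one inclusion, so that the maxima over the two spectra are genuinely equal rather than merely comparable in one direction. Stating it at the level of the full spectrum, or equivalently through the characteristic-polynomial identity above, removes any ambiguity.
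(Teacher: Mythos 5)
Your proof is correct and takes essentially the same route as the paper: both negate the eigenvector equation ${\bf{F}}{\bf{x}} = \lambda{\bf{x}}$ to get $(-{\bf{F}}){\bf{x}} = (-\lambda){\bf{x}}$ and observe that $|-\lambda| = |\lambda|$. Your added care about the correspondence being a bijection (and the characteristic-polynomial alternative) is a minor tightening the paper's three-line proof glosses over, but the core argument is identical.
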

\begin{proof}
By definition ${\bf{Fx}}= \lambda{\bf{x}}$ where for a given eigenvalue $\lambda$, $\bf{x}$ is the associated eigenvector. Multiplying the equation by $-1$ would yield $(-{\bf{F}}){\bf{x}}= (-\lambda){\bf{x}}$. Since this does not change the absolute value of $-\lambda$
hence $|\lambda_{(-{\bf{F}})}|=|\lambda_{{\bf{F}}}|$.
\end{proof}
\qedhere
We know from \cite[p 184]{lancaster} that when all the eigenvalues of a square matrix $\bf{Q}$ satisfy the condition $|\lambda_{\bf{Q}}|<1$ then the matrix series ${\bf{I}}+{\bf{Q}}+{\bf{Q}}^2+\cdots = [{\bf{I}}-{\bf{Q}}]^{-1}$. We can therefore substitute ${\bf{Q}}=-{\bf{F}}$ when the spectral radius of $\bf{F}$ is less than one, to get $\left[{\bf{I}} - {\bf{F}}+{\bf{F}}^2+\cdots\right]=[{\bf{I}} +{\bf{F}} ]^{-1}$. Thus, convergence is not affected whether the eigenvalues are positive or negative as long as their absolute value remain bounded by 1. 

\section{Asynchronous Systems}
Thus far, we have assumed that in every iteration all $n$ nodes update their values. Now suppose that only a subset of the $n$ nodes will update in an iteration. Moreover, the subset of updating nodes may change from iteration to iteration. 

\subsection*{Binary Diagonal Random Matrices}
A matrix may be considered \emph{random} if its entries consist of random numbers from some specified distribution \cite{anderson_random_matrices}. We define a binary diagonal random matrix ${\bf{A}}(k)= diag[a_1(k), \cdots, a_n(k)]$ as an $n \times n$ diagonal matrix where $a_i(k) \in \{1,0\}$. If node $i$ updates in iteration $k$ then $a_i(k)=1$, otherwise $a_i(k)=0$. The probability mass function of the values of $a_i(k), \forall i, k$ could be arbitrary.

Given ${\bf{A}}(k)$, the asynchronous iterative system can be described as:
\begin{eqnarray}\
\begin{split} \label{asynclocaladapt}
{\bf{P}}(k+1) &= [{\bf{I}}-{\bf{A}}(k)]{\bf{P}}(k) + {\bf{A}}(k)[{\bf{D}} + {\bf{F}}{\bf{P}}(k)] \\
& = {\bf{D}}(k) + {\bf{F}}(k){\bf{P}}(k)
\end{split}
\end{eqnarray}
where 
\begin{equation}
{\bf{D}}(k) = {\bf{A}}(k){\bf{D}}
\end{equation}
and the modified random iterative matrix ${\bf{F}}(k)$ as
\begin{equation}
{\bf{F}}(k) = {\bf{A}}(k){\bf{F}}+[{\bf{I}}-{\bf{A}}(k)]. \label{FK}
\end{equation}
Note that ${\bf{F}}(k)$ can be considered a \emph{random matrix} since its entries are based on any any arbitrary delay distribution in the nodes' update. 

In \eqref{asynclocaladapt}, a node updates in iteration $k$ or maintains its value from the previous iteration $P_i(k+1) = P_i(k)$. If all nodes update in iteration $k$ then ${\bf{A}}(k)={\bf{I}}$ and we have ${\bf{P}}(k+1)={\bf{D}}+{\bf{F}}{\bf{P}}(k)$. Conversely, no node updates in the iteration then ${\bf{A}}(k)=0.{\bf{I}}$ and thus ${\bf{P}}(k+1)={\bf{P}}(k)$ (all other cases being intermediate situations). 

\begin{lemma}
If $\rho({\bf{F}})<1$ then $\rho({\bf{F}}(k))=1$ if $a_i(k)=0$ for any node $i$.
\end{lemma}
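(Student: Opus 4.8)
The plan is to use the explicit form of ${\bf{F}}(k)$ in \eqref{FK} and to split the claim $\rho({\bf{F}}(k))=1$ into a lower bound $\rho({\bf{F}}(k))\geq 1$ and a matching upper bound $\rho({\bf{F}}(k))\leq 1$. For the lower bound I would show that $1$ is always an eigenvalue of ${\bf{F}}(k)$ whenever some $a_i(k)=0$. The slick route is the algebraic identity
$${\bf{F}}(k)-{\bf{I}} = {\bf{A}}(k){\bf{F}} + [{\bf{I}}-{\bf{A}}(k)] - {\bf{I}} = {\bf{A}}(k)({\bf{F}}-{\bf{I}}),$$
which follows directly from \eqref{FK}. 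Since $a_i(k)=0$ for at least one node, the diagonal matrix ${\bf{A}}(k)$ carries a zero on its diagonal, so $\det{\bf{A}}(k)=\prod_i a_i(k)=0$, and hence $\det({\bf{F}}(k)-{\bf{I}})=\det({\bf{A}}(k))\det({\bf{F}}-{\bf{I}})=0$. Thus ${\bf{F}}(k)-{\bf{I}}$ is singular and $1$ is an eigenvalue, giving $\rho({\bf{F}}(k))\geq 1$. Equivalently, every non-updating row $i$ of ${\bf{F}}(k)$ coincides with the $i$th row of ${\bf{I}}$, so ${\bf{F}}(k)-{\bf{I}}$ has a zero row; either observation settles the lower bound unconditionally.

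For the upper bound I would reorder the indices so that the non-updating nodes are listed first. This puts ${\bf{F}}(k)$ into block lower-triangular form, with a leading identity block coming from the rows that reduced to rows of ${\bf{I}}$, and a trailing block equal to the principal submatrix ${\bf{F}}_{S}$ of ${\bf{F}}$ indexed by the updating nodes $S$. Because the spectrum of a block-triangular matrix is the union of the spectra of its diagonal blocks, $\rho({\bf{F}}(k))=\max\{1,\rho({\bf{F}}_{S})\}$, and it remains only to bound $\rho({\bf{F}}_{S})$. Since the cross-talk gains $g_{i,j}$ are nonnegative, ${\bf{F}}$ is a nonnegative matrix; padding ${\bf{F}}_{S}$ back to an $n\times n$ matrix with zeros outside the $S\times S$ block produces a matrix that is entrywise dominated by ${\bf{F}}$ yet has the same nonzero spectrum, so Perron--Frobenius monotonicity of the spectral radius gives $\rho({\bf{F}}_{S})\leq\rho({\bf{F}})<1$. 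Combining the two bounds yields $\rho({\bf{F}}(k))=1$.

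The hard part will be the upper bound, specifically the inequality $\rho({\bf{F}}_{S})\leq\rho({\bf{F}})$. This monotonicity fails for general sign-indefinite matrices: a principal submatrix can have a strictly larger spectral radius than its parent, so one can construct an ${\bf{F}}$ with $\rho({\bf{F}})<1$ for which dropping a single node inflates $\rho({\bf{F}}(k))$ above $1$. The argument therefore genuinely depends on ${\bf{F}}$ being nonnegative, as it is in the interference model, and I would make that hypothesis explicit rather than leave it implicit. The lower-bound half, by contrast, needs no such assumption and is immediate from the factorization ${\bf{F}}(k)-{\bf{I}}={\bf{A}}(k)({\bf{F}}-{\bf{I}})$ together with the singularity of ${\bf{A}}(k)$.
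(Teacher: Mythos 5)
Your proof is correct and strictly more complete than the paper's, which covers only your lower-bound half. The paper's proof observes that a non-updating row $i$ of ${\bf{F}}(k)$ equals the $i$th row of the identity and appeals to the Gershgorin circle theorem to conclude $\rho({\bf{F}}(k))=1$. Your factorization ${\bf{F}}(k)-{\bf{I}}={\bf{A}}(k)({\bf{F}}-{\bf{I}})$ (equivalently, the zero row of ${\bf{F}}(k)-{\bf{I}}$, or the left-eigenvector identity $e_i^{T}{\bf{F}}(k)=e_i^{T}$) is actually tighter than the paper's citation: Gershgorin only confines the spectrum to the union of the discs and does not by itself place an eigenvalue in the degenerate disc $\{1\}$. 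More importantly, the paper has no counterpart to your upper bound at all: it passes from ``$1$ is an eigenvalue'' to ``$\rho({\bf{F}}(k))=1$'' without excluding eigenvalues of modulus greater than one, which is exactly the content your identity $\rho({\bf{F}}(k))=\max\{1,\rho({\bf{F}}_S)\}$ isolates. Your caveat about nonnegativity is not pedantry but a genuine correction: the paper never assumes $g_{i,j}\geq 0$ (its lemma on $-{\bf{F}}$ even suggests sign-indefinite entries are in scope), and without nonnegativity the lemma as stated is false. Concretely, take $n=3$ with zero diagonal, $g_{1,2}=g_{1,3}=4$, $g_{2,1}=1$, $g_{3,1}=-1$, $g_{2,3}=g_{3,2}=0$; then ${\bf{F}}^3=0$, so $\rho({\bf{F}})=0<1$, yet if only node $3$ fails to update, ${\bf{F}}(k)$ has characteristic polynomial $(\lambda-1)(\lambda^2-4)$ and $\rho({\bf{F}}(k))=2$. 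So where the paper's one-line argument would certify $\rho=1$, the truth can be $\rho=2$; your block-triangular reduction combined with Perron--Frobenius monotonicity, under the explicit hypothesis ${\bf{F}}\geq 0$ entrywise, is the correct repair, and the same hypothesis is silently needed downstream in Theorem~\ref{jointSpecRad}, whose proof takes for granted that powers and products of the matrices ${\bf{F}}(t)$ remain bounded.
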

\begin{proof}
We are given that ${\bf{F}}(k)={\bf{A}}(k){\bf{F}} +[{\bf{I}}-{\bf{A}}(k)]$ and the spectral radius of the matrix is such that $\rho({\bf{F}})<1$. Therefore, if $a_i(k)=0$, then row $i$ in matrix ${\bf{F}}(k)$ will all have zeros elements except for the diagonal element ${\bf{F}}(k)(i,i)=1$. As per Gershgorin circle theorem \cite{meyer04}, by definition the matrix ${\bf{F}}(k)$ now has an eigenvalue of $1$ (i.e. $\rho({\bf{F}}(k))=1$).
\end{proof}
In other words, if any node does not update in iteration $k$ then this renders the spectral radius of matrix ${\bf{F}}(k)$ equal to one.

Next assume that each node updates at least once within any $T$ consecutive iterations. This constraint can be captured as $\sum_{t = k-T}^k{\bf{A}}(t) \geq {\bf{I}}$ where the diagonal indicates the total number of times each node has updated between the current iteration $k$ and the preceding $T$ iterations (i.e. interval $t \in \{k, k-1, \cdots, k-T+1\}$). Such a constraint can be considered as a bound on the   random delays in the updates. 

\begin{theorem}
If $\rho(\bf{F})<1$, and $\sum_{t = k-T}^k{\bf{A}}(t) \geq {\bf{I}}$ then $\rho\left(\prod^k_{t=k-T+1}{\bf{F}}(t)\right)<1$. \label{jointSpecRad}
\end{theorem}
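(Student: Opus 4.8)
The plan is to exhibit a single vector norm in which every factor ${\bf{F}}(t)$ is non-expansive and, crucially, in which each factor strictly contracts exactly the coordinates that update at step $t$; the coverage hypothesis then forces every coordinate of the product to have been contracted at least once. Since $\rho({\bf{F}})<1$ --- and in the interference model underlying \eqref{matrix} the gains $g_{i,j}$ are non-negative, so that $\rho(|{\bf{F}}|)=\rho({\bf{F}})<1$ for the entrywise-absolute-value matrix $|{\bf{F}}|$ --- there is a strictly positive vector ${\bf{v}}$ and a constant $c<1$ with $|{\bf{F}}|{\bf{v}}\le c{\bf{v}}$ entrywise, by the standard Perron--Frobenius consequence of a sub-unit spectral radius. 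I would work in the induced weighted max-norm $\|{\bf{x}}\|_{\bf{v}}=\max_i |x_i|/v_i$, whose decisive virtue is that every diagonal $0/1$ matrix --- in particular each ${\bf{A}}(t)$ and each $[{\bf{I}}-{\bf{A}}(t)]$ --- is automatically non-expansive, while ${\bf{F}}$ satisfies $\|{\bf{F}}\|_{\bf{v}}\le c<1$.

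Next I would set ${\bf{x}}_0={\bf{x}}$ and propagate ${\bf{x}}_s={\bf{F}}(k-T+s){\bf{x}}_{s-1}$ for $s=1,\dots,T$, so that ${\bf{x}}_T=\left(\prod_{t=k-T+1}^{k}{\bf{F}}(t)\right){\bf{x}}$. Reading off row $i$ of \eqref{FK}, the $i$-th coordinate either holds its value (when $a_i(k-T+s)=0$) or is replaced by the $i$-th entry of ${\bf{F}}{\bf{x}}_{s-1}$ (when $a_i(k-T+s)=1$). In the first case $|({\bf{x}}_s)_i|/v_i=|({\bf{x}}_{s-1})_i|/v_i$, and in the second $|({\bf{x}}_s)_i|/v_i\le c\,\|{\bf{x}}_{s-1}\|_{\bf{v}}$ via $|{\bf{F}}|{\bf{v}}\le c{\bf{v}}$; both are at most $\|{\bf{x}}_{s-1}\|_{\bf{v}}$, so the scalars $\|{\bf{x}}_s\|_{\bf{v}}$ are non-increasing in $s$.

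The combinatorial heart is then a ``last-update'' trace-back. For each coordinate $i$ the coverage hypothesis guarantees a last step $s_i\in\{1,\dots,T\}$ at which node $i$ updates; for all later steps node $i$ merely holds its value, so $({\bf{x}}_T)_i=({\bf{x}}_{s_i})_i$, and at step $s_i$ the contracting case applies, giving $|({\bf{x}}_T)_i|/v_i\le c\,\|{\bf{x}}_{s_i-1}\|_{\bf{v}}\le c\,\|{\bf{x}}\|_{\bf{v}}$ by monotonicity. Maximizing over $i$ yields $\big\|\big(\prod_{t}{\bf{F}}(t)\big){\bf{x}}\big\|_{\bf{v}}\le c\,\|{\bf{x}}\|_{\bf{v}}$ for every ${\bf{x}}$, hence $\rho\big(\prod_{t}{\bf{F}}(t)\big)\le\big\|\prod_{t}{\bf{F}}(t)\big\|_{\bf{v}}\le c<1$. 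The same mechanism explains why coverage is exactly the right condition: the only term in the expansion of $\prod_{t}\big({\bf{A}}(t){\bf{F}}+[{\bf{I}}-{\bf{A}}(t)]\big)$ that carries no factor of ${\bf{F}}$ is $\prod_{t}[{\bf{I}}-{\bf{A}}(t)]=\mathrm{diag}\big(\prod_{t}(1-a_i(t))\big)$, which is the zero matrix precisely because some $(1-a_i(t))$ vanishes in every diagonal slot.

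I expect the main obstacle to be the non-expansiveness/norm-existence step rather than the trace-back combinatorics: the clean contraction needs $\rho(|{\bf{F}}|)<1$, which follows from $\rho({\bf{F}})<1$ only when ${\bf{F}}$ is non-negative (as it is here, since the entries are interference gains). For a genuinely signed ${\bf{F}}$ with $\rho({\bf{F}})<1\le\rho(|{\bf{F}}|)$ one can build update patterns whose product is expanding, so I would make explicit before invoking the weighted max-norm that $\rho({\bf{F}})<1$ is to be read together with the non-negativity inherent in the model.
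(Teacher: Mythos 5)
Your proof is correct under the nonnegativity reading you make explicit, and it takes a genuinely different route from the paper's. The paper argues factor-by-factor: since $\rho({\bf F})<1$, it claims column $i$ of $\lim_{m\rightarrow\infty}{\bf F}(t)^m$ vanishes exactly when $a_i(t)=1$, that coverage therefore makes the product of these limits the zero matrix, and then passes from $\lim_m {\bf F}(t)^m\,\lim_m {\bf F}(t-1)^m\cdots$ to $\lim_m\bigl({\bf F}(t){\bf F}(t-1)\cdots\bigr)^m=0$ to conclude the joint spectral radius is below one. That interchange is invalid for non-commuting matrices (in general $({\bf A}{\bf B})^m\neq{\bf A}^m{\bf B}^m$), and a product of matrices each of which has \emph{some} zero columns need not vanish, so the paper's proof is heuristic where yours is airtight: the weighted max-norm $\|\cdot\|_{\bf v}$ built from a Perron vector with $|{\bf F}|{\bf v}\le c\,{\bf v}$, combined with your last-update trace-back, yields the stronger operator-norm bound $\bigl\|\prod_t {\bf F}(t)\bigr\|_{\bf v}\le c<1$, which moreover gives an explicit contraction constant that could feed into the paper's later rate-of-convergence discussion. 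Your closing caveat is substantive rather than defensive: for signed ${\bf F}$ with $\rho({\bf F})<1\le\rho(|{\bf F}|)$, single-coordinate sweeps make $\prod_t\bigl({\bf A}(t){\bf F}+{\bf I}-{\bf A}(t)\bigr)$ a Gauss--Seidel-type operator, and there are classical matrices for which Jacobi converges but Gauss--Seidel diverges, so the theorem as literally stated needs the nonnegativity (equivalently $\rho(|{\bf F}|)<1$) hypothesis that the paper never writes down --- this is essentially the Chazan--Miranker condition for asynchronous iterations, and it silently underlies the paper's argument as well.

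One small mismatch you should state explicitly: the hypothesis window $\sum_{t=k-T}^{k}{\bf A}(t)\ge{\bf I}$ includes $t=k-T$, while the product runs over $t\in\{k-T+1,\dots,k\}$. If node $i$'s only update falls at $t=k-T$, then every factor in the product has row $i$ equal to row $i$ of the identity, hence so does the product, and $1$ remains an eigenvalue --- so the conclusion fails for the paper's stated window. Your trace-back tacitly (and correctly) assumes each node updates at some step $s_i\in\{1,\dots,T\}$ inside the product window; make that assumption explicit and your argument is complete.
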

\begin{proof}
If the spectral radius of $\bf{F}$ is less than one (i.e. $|\lambda_{\bf{F}}|<1$) then $\lim\limits_{k \rightarrow \infty} {\bf{F}}^k = 0. {\bf{I}}$ \cite[pg. 618]{meyer04}. 
Recall that ${\bf{F}}(t) = {\bf{A}}(t){\bf{F}}+[{\bf{I}}-{\bf{A}}(t)]$ where $k-T+1\leq t\leq k$. 
Next, note that column $i$ of the matrix $\lim\limits_{k \rightarrow \infty} {\bf{F}}_t^k$ will converge to an all zeros vector if $a_i(t) = {\bf{A}}(t)(i,i)=1$. Conversely, if $a_i(t)={\bf{A}}_t(i,i)=0$, then column $i$ of $\lim\limits_{k \rightarrow \infty} {\bf{F}}_t^k$ will not converge to a zeros vector (see \cite[pg. 630]{meyer04} for a detailed discussion). 

Next consider the constraint that link $i$ adapt within $T$ iterations. If the diagonal entry ${\bf{A}}(t)(i,i) = 0$, then there must be some other ${\bf{A}}(u)(i,i) = 1$ where $u \in \{t,t+1, \cdots t+T\}: u \neq t$. Thus, column $i$ of the corresponding matrix $\lim\limits_{k \rightarrow \infty} {\bf{F}}_u^k$ would be an all zeros vector. Consequently, over $T$ iterations, we deduce that
\begin{equation}
\begin{split}
\lim\limits_{k \rightarrow \infty} {\bf{F}}(t)^k\lim\limits_{k \rightarrow \infty} {\bf{F}}({t-1})^k\cdots \lim\limits_{k \rightarrow \infty} {\bf{F}}({t-T})^k = 0. {\bf{I}}\\
\lim\limits_{k \rightarrow \infty} {\bf{F}}(t)^k{\bf{F}}({t-1})^k\cdots {\bf{F}}({t-T})^k = 0. {\bf{I}}\\
\lim\limits_{k \rightarrow \infty} \left({\bf{F}}(t){\bf{F}}({t-1})\cdots {\bf{F}}({t-T})\right)^k = 0. {\bf{I}}\\
\end{split}
\end{equation}
By definition, the spectral radius of a matrix is less than one if its power taken to infinite results an all zeros matrix (i.e. $\lim\limits_{k \rightarrow \infty} \left({\bf{F}}(t){\bf{F}}({t-1})\cdots {\bf{F}}({t-T})\right)^k = 0. {\bf{I}}$) \cite[pg. 618]{meyer04}. Thus, for any consecutive $T$ iterations, the spectral radius of the corresponding matrix ${\bf{F}}(t){\bf{F}}({t-1})\cdots{\bf{F}}({t-T})$ is strictly less than one. This also implies that as $k \gg T$, 
\begin{equation}
\begin{split}
\lim\limits_{k \rightarrow \infty} {\bf{F}}(k){\bf{F}}({k-1})\cdots {\bf{F}}(1) &= 
\lim\limits_{k \rightarrow \infty} \left({\bf{F}}(k)\cdots {\bf{F}}({k-T})\right)\left({\bf{F}}({k-T-1})    
\cdots {\bf{F}}({k-2T-1})\right)\cdots\\
&= 0.{\bf{I}}
\end{split}
\end{equation}
\end{proof}
In above, if each node updates at least once in every $T$ consecutive iterations then the product of matrices has a spectral radius less than one. Next, we show convergence of the matrix series in \eqref{asynclocaladapt} based on Theorem~\ref{jointSpecRad}.
\begin{theorem}
If $\rho\left({\bf{F}}\right)<1$, and each node updates at least once in every $T$ consecutive iterations, then the system converges. \label{convProof}
\end{theorem}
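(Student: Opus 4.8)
The plan is to reduce the time-varying, forced recursion \eqref{asynclocaladapt} to a purely homogeneous error recursion that is driven only by the matrix products already controlled in Theorem~\ref{jointSpecRad}. First I would exhibit a common fixed point shared by every update pattern. Let $\mathbf{P}^{\ast}$ denote the synchronous fixed point of the first lemma, so that $\mathbf{P}^{\ast} = \mathbf{D} + \mathbf{F}\mathbf{P}^{\ast}$. Substituting this identity into the definitions $\mathbf{D}(k)=\mathbf{A}(k)\mathbf{D}$ and \eqref{FK}, a direct computation gives $\mathbf{D}(k) + \mathbf{F}(k)\mathbf{P}^{\ast} = \mathbf{A}(k)[\mathbf{D}+\mathbf{F}\mathbf{P}^{\ast}] + [\mathbf{I}-\mathbf{A}(k)]\mathbf{P}^{\ast} = \mathbf{P}^{\ast}$, so $\mathbf{P}^{\ast}$ is invariant under the asynchronous update \eqref{asynclocaladapt} for every binary diagonal matrix $\mathbf{A}(k)$, independent of which subset of nodes updates.

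Next I would track the error $\mathbf{e}(k) = \mathbf{P}(k) - \mathbf{P}^{\ast}$. Subtracting the fixed-point identity $\mathbf{P}^{\ast} = \mathbf{D}(k) + \mathbf{F}(k)\mathbf{P}^{\ast}$ from \eqref{asynclocaladapt} cancels the forcing term $\mathbf{D}(k)$ and leaves the homogeneous recursion $\mathbf{e}(k+1) = \mathbf{F}(k)\mathbf{e}(k)$. This cancellation is the crux of the reduction: convergence of $\mathbf{P}(k)$ to $\mathbf{P}^{\ast}$ is now equivalent to $\mathbf{e}(k)\rightarrow\mathbf{0}$, and unrolling the recursion gives $\mathbf{e}(k) = \left(\prod_{t=1}^{k}\mathbf{F}(t)\right)\mathbf{e}(0)$ for any choice of the initial vector $\mathbf{P}(0)$.

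To finish, I would invoke Theorem~\ref{jointSpecRad}. Because each node updates at least once within every $T$ consecutive iterations, grouping the product $\prod_{t=1}^{k}\mathbf{F}(t)$ into consecutive blocks of length $T$ makes every block a matrix of spectral radius strictly less than one, and the argument of Theorem~\ref{jointSpecRad} shows the full product tends to the zero matrix as $k\rightarrow\infty$. Hence $\mathbf{e}(k)\rightarrow\mathbf{0}$ and $\mathbf{P}(k)\rightarrow\mathbf{P}^{\ast}$ from every starting point, which is exactly the claimed convergence.

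The step I expect to be the main obstacle is precisely the passage from ``each $T$-block product has spectral radius below one'' to ``the product of the blocks vanishes''. Spectral radius is not submultiplicative, so one cannot simply multiply the per-block bounds, and different windows generally produce distinct, non-commuting block matrices. I would close this gap by noting that each $\mathbf{A}(k)$, hence each $\mathbf{F}(k)$ and each $T$-block product, ranges over a finite set (at most $2^{n}$ patterns), choosing a submultiplicative matrix norm in which every block product is a strict contraction, and then bounding $\left\|\prod_{t=1}^{k}\mathbf{F}(t)\right\|$ by the product of the resulting contraction factors to obtain geometric decay; this quantitative bound is also what would feed the convergence-rate expression stated in the abstract.
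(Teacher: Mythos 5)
Your reduction is genuinely different from, and cleaner than, the paper's argument. The paper expands \eqref{asynclocaladapt} into the forced series \eqref{asynseries} and argues via an absolute-convergence test plus comparison with a majorizing series ${\bf{R}}+{\bf{Q}}{\bf{R}}+{\bf{Q}}^2{\bf{R}}+\cdots$; you instead exhibit a fixed point ${\bf{P}}^{\ast}$ common to every update pattern and cancel the forcing term, reducing everything to the homogeneous product $\prod_t {\bf{F}}(t)$. Your invariance identity ${\bf{D}}(k)+{\bf{F}}(k){\bf{P}}^{\ast}={\bf{P}}^{\ast}$ is correct, and note that your ${\bf{P}}^{\ast}=[{\bf{I}}-{\bf{F}}]^{-1}{\bf{D}}$ is the fixed point actually consistent with the recursion \eqref{matrixadapt} as written; the paper's synchronous lemma asserts $[{\bf{I}}+{\bf{F}}]^{-1}{\bf{D}}$ via an alternating-sign expansion that does not match \eqref{matrixadapt}, a sign inconsistency your derivation quietly repairs. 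Your reduction also makes explicit what the paper leaves implicit: the whole theorem rests on whether the infinite product of non-commuting matrices ${\bf{F}}(t)$ vanishes.

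However, your proposed closure of the gap you correctly flagged does not work, and the gap is in fact fatal to the statement under the bare hypothesis $\rho({\bf{F}})<1$. Finiteness of the family of $T$-block products, each with spectral radius below one, does not yield a common submultiplicative norm making all of them contractions: such a norm exists precisely when the \emph{joint} spectral radius of the family is below one, which is strictly stronger. For instance, ${\bf{B}}_1=\left(\begin{smallmatrix}0&2\\0&0\end{smallmatrix}\right)$ and ${\bf{B}}_2=\left(\begin{smallmatrix}0&0\\2&0\end{smallmatrix}\right)$ are both nilpotent, yet $\rho({\bf{B}}_1{\bf{B}}_2)=4$, so no single norm makes both contractive. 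Worse, the specific blocks arising here can genuinely misbehave: if nodes update one at a time in cyclic order (so $T=n$ and every window of $T$ iterations contains each node exactly once), the $T$-block product is exactly the Gauss--Seidel iteration matrix for ${\bf{P}}={\bf{D}}+{\bf{F}}{\bf{P}}$, and there are classical matrices with $\rho({\bf{F}})<1$ (Jacobi convergent) whose Gauss--Seidel matrix has spectral radius exceeding one. This simultaneously shows that Theorem~\ref{jointSpecRad} cannot be invoked as a black box --- its proof in the paper rests on the interchange $\lim_k\left({\bf{F}}(t)\cdots{\bf{F}}(t-T)\right)^k=\lim_k{\bf{F}}(t)^k\cdots\lim_k{\bf{F}}(t-T)^k$, which is valid only for commuting factors --- and that neither your argument nor the paper's can be completed as stated. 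The standard repair is to strengthen the hypothesis to $\rho(|{\bf{F}}|)<1$ (the Chazan--Miranker condition, which the paper's own proof tacitly leans on when it passes to entrywise absolute values and compares against a nonnegative series): then there is a positive vector ${\bf{w}}$ with $|{\bf{F}}|{\bf{w}}\leq c\,{\bf{w}}$ for some $c<1$, the weighted max norm $\|{\bf{x}}\|_{{\bf{w}}}=\max_i |x_i|/w_i$ makes every ${\bf{F}}(k)$ nonexpansive and every $T$-block a $c$-contraction, and your error recursion then delivers geometric convergence exactly as you intended.
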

\begin{proof}
We can expand the series in (\ref{asynclocaladapt}) as follows:
\begin{eqnarray}
\begin{split} \label{asynseries}
{\bf{P}}(k+1) &={\bf{D}}(k)+{\bf{F}}({k-1}){\bf{D}}({k-1})+{\bf{F}}({k}){\bf{F}}({k-1}){\bf{D}}({k-2})\\
&+{\bf{F}}({k}){\bf{F}}({k-1}){\bf{F}}({k-2}){\bf{D}}({k-3})+\cdots\\
\end{split}
\end{eqnarray}
To show convergence of the series in (\ref{asynseries}), we take the \emph{absolute convergence} test \cite[pg. 181]{pugh2002}. That is, an infinite series of real numbers $\sum_{t}^{\infty} f_t$ will converge if the absolute of all its terms $\sum_t^{\infty} |f_t|$ converges. We first show the convergence of
\begin{equation}
\begin{split}
{\bf{D}}(k) +|{\bf{F}}({k-1})|{\bf{D}}({k-1}) +|{\bf{F}}({k}){\bf{F}}({k-1})|{\bf{D}}({k-2})+\\
|{\bf{F}}({k}){\bf{F}}({k-1}){\bf{F}}({k-2})|{\bf{D}}({k-3})+\cdots
\end{split}\label{asynproof}
\end{equation}
where, the terms ${\bf{D}}({k-i})$ can be separated out. 
Let us now consider a couple of intermediate steps in our proof. Firstly, a series of non-negative real numbers $\sum_{t}^{\infty}|f_t|$ will converge if a bounding series $\sum_t^{\infty} |q_t|$ such that $|q_t|\geq |f_t| \geq 0$ converges \cite[pg. 180]{pugh2002}. Secondly, a matrix series of the type ${\bf{I}}+{\bf{Q}}+{\bf{Q}}^2 + \cdots+ {\bf{Q}}^k$ converges if $\lim\limits_{k \rightarrow \infty} {\bf{Q}}^k = 0.{\bf{I}}$ \cite[pg. 126]{meyer04}. For some finite $T: T <k$
$$
\lim\limits_{k \rightarrow\infty}
|{\bf{F}}({k}){\bf{F}}({k-1})\cdots {\bf{F}}({1})| = 0.{\bf{I}}.
$$
Thus, for some arbitrary non-negative valued ${\bf{Q}}$ and ${\bf{R}}$ of dimensions $n \times n$ and $n \times 1$ respectively we will have 
\begin{equation}
\begin{split}
\lim\limits_{k \rightarrow\infty}{\bf{R}}+{\bf{Q}}{\bf{R}}+{\bf{Q}}^2{\bf{R}} + \cdots {\bf{Q}}^k{\bf{R}} &\geq \lim\limits_{k \rightarrow\infty} {\bf{D}}({k}) +|{\bf{F}}({k-1})|{\bf{D}}({k-1}) + \cdots \\
&|{\bf{F}}({k}){\bf{F}}({k-1})\cdots {\bf{F}}({1}) |{\bf{D}}({0}). 
\end{split}
\end{equation}
Thus this implies that the series in (\ref{asynproof}) will converge and so will the series in (\ref{asynseries}) as per the absolute convergence test.
\end{proof}



\subsection*{Estimation Error}
In practical systems, there may be an estimation error that renders node's knowledge about the system state as imperfect \cite{Medard2000}. The updates will thus be based on inaccurate state information.
Essentially, the estimation error will project ${{E}}(k)$ into $\widehat{E_i}(k)= \sum_{j\neq i} {\widehat{g_{i,i}}} P_j(k)$. Consequently, the matrix of the whole system can be denoted as $\widehat{\bf{F}}$ due to the imperfect or faulty state information remains constant over the time interval if the estimates are independent of the parameter updates. In that case, if we have $\rho(\widehat{\bf{F}})<1$ the system still convergences as Theorems~\ref{jointSpecRad} and ~\ref{convProof} still hold. 

\section{Rate of Convergence}
We let the probability mass function of the random variables $a_i(k)$ be based on an ergodic process and as follows:
\begin{equation}
a_i(k)=
\begin{cases}
0 \mbox{, $p_{ik}$,}\\
1 \mbox{, $1-p_{ik}$}
\end{cases}
\end{equation}
where $p_{ik}$ denotes the probability that node $i$ will update in iteration $k$. We define
\begin{equation}
\gamma = \underset{i,k}{\min} p_{ik}
\end{equation}
that denotes the lowest probability of an update by any node over the ergodic process. Therefore, the lower bound on the probability, denoted as $\lambda$, that in a $T$ iteration interval, all nodes will update at least once is
\begin{equation}
\lambda = \mbox{Prob($\sum_{t = k-T}^k{\bf{A}}(t) \geq {\bf{I}}$)}\geq (1-(1-\gamma)^T)^n
\end{equation}
given that the nodes are independent. The \emph{rate of convergence} $R$ may be described as the effective spectral radius over the $T$ iteration interval (i.e. a measure of $\frac{{\bf{P}}(k)-{\bf{P}}(k-T)}{T}$). Formally, it is defined as follows:
\begin{equation}
\begin{split}
R &:= \rho({\bf{F}})\frac{\lambda}{T}\\
&= \rho({\bf{F}})\frac{(1-(1-\gamma)^T)^n}{T}.
\end{split}
\end{equation}

Alternatively, if it is certain that each node will update in every iterations $T$, but the randomness is limited to the exact number of updates that each node will perform (i.e. ${\bf{I}} \leq \sum_{t = k-T}^k{\bf{A}}(t) \leq T.{\bf{I}}$), then the convergence rate is simply 
\begin{equation}
R := \frac{\rho({\bf{F}})}{T}
\end{equation}

\bibliography{dissertationBiblio}

%








\end{document}